\newtheorem{lemma}{Lemma}
\newtheorem{theorem}[lemma]{Theorem}
\newtheorem{definition}[lemma]{Definition}
\title{Graphs with Sudoku number  $n-1$}
\author{Alexey Pokrovskiy  \footnote{Email: \texttt{dr.alexey.pokrovskiy@gmail.com}. Address: Department of Mathematics, University College London, UK.}}
\begin{document}
\maketitle
\begin{abstract}
Recently Lau-Jeyaseeli-Shiu-Arumugam introduced the concept of the ``Sudoku colourings'' of graphs --- partial $\chi(G)$-colourings of $G$ that have a unique extension to a proper $\chi(G)$-colouring of all the vertices. They introduced the Sudoku number of a graph as the minimal number of coloured vertices in a Sudoku colouring. They conjectured that   a connected graph has Sudoku number $n-1$ if, and only if, it is complete. In this note we prove that this is true. 
\end{abstract}

\section{Introduction}
All colourings in this note are vertex-colourings.
A vertex colouring of a graph is \emph{proper} if adjacent vertices receive different colours. The chromatic number, $\chi(G)$ is the minimal number of colours in a proper colouring of $G$. A \emph{partial} proper colouring of a graph is a colouring of some subset of $V(G)$ which doesn't give adjacent vertices the same colour. We say that a colouring $\phi$ extends a partial colouring $\psi$ if all vertices coloured in $\psi$ receive the same colour in $\phi$.
A variety of tasks can be encoded as taking a partial proper colouring in a graph and then extending it to a full proper colouring of all the vertices. 

For example the well known ``Sudoku puzzle'' can be encoded in this form. Consider a graph $G_{\mathrm{Sudoku}}$ on $27$ vertices that are identified with the cells in a $9\times 9$ grid. Add an edge between any two vertices in the same column, between any two vertices in the same row, and between any two vertices in the same $3\times 3$ box. It is easy to see that a proper $9$-colouring of $G_{\mathrm{Sudoku}}$  exactly corresponds to filling in $9\times 9$ array according to the rules of Sudoku puzzles. Thus a Sudoku puzzle can be summarized as ``you are given a partial colouring of $G_{\mathrm{Sudoku}}$ and need to complete it to a proper colouring of all the vertices of $G_{\mathrm{Sudoku}}$''. 

One of the conventions for designing Sudoku puzzles is that there should always be precisely one way of filling in the $9\times 9$ array i.e. there should always exist one solution, and there shouldn't exist multiple solutions. This motivates the definition of a Sudoku colouring of a graph. Lau-Jeyaseeli-Shiu-Arumugam defined it as follows in~\cite{Sudoku_colourings}.
\begin{definition}
A Sudoku colouring of a graph $G$ is a partial proper $\chi(G)$-colouring of $G$ which has precisely one extension to a  $\chi(G)$-colouring of $G$.
\end{definition}
Sudoku colourings of $G_{\mathrm{Sudoku}}$ are thus in one-to-one correspondence with uncompleted Sudoku puzzles (that have unique completion). But we can also investigate Sudoku colourings of general graphs.  

Lau-Jeyaseeli-Shiu-Arumugam~\cite{Sudoku_colourings} defined the Sudoku number of $G$, $sn(G)$ as the smallest number of coloured vertices in a Sudoku colouring of $G$. The motivation for this is that $sn(G_{\mathrm{Sudoku}})$ now asks for the minimum number of clues (i.e. non-blank entries) in a Sudoku puzzle with unique solution. This number has been determined as $sn(G_{\mathrm{Sudoku}})=17$ by McGuire,  Tugemann, and Civario using a computer-assisted proof~\cite{mcguire2014there}. 

For general graphs, Lau-Jeyaseeli-Shiu-Arumugam~\cite{Sudoku_colourings} determined  $sn(G)$ for various classes of graphs $G$ and obtained bounds for other classes. For example, they showed that $sn(G)=1$ if, and only if, $G$ is connected and bipartite. On the other extreme, they showed that $sn(G)\leq |G|-1$ for all graphs and conjectured that for connected graphs, equality holds if, and only if, $G$ is complete. Here we show that this is the case. 
\begin{theorem}\label{Main_Theorem}
A connected graph has $sn(G)=|G|-1$ if, and only if, $G$ is complete.
\end{theorem}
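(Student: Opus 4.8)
The plan is to treat the two directions separately; throughout write $n=|G|$ and $k=\chi(G)$. The easy direction is a counting argument: if $G=K_n$ then $k=n$ and every proper $n$-colouring is a bijection onto the colours, so a partial colouring on at most $n-2$ vertices leaves at least two vertices blank and exactly as many colours unused, and these can be distributed among the blanks in more than one way — hence it is not Sudoku — whereas colouring any $n-1$ vertices forces the last, so $sn(K_n)=n-1$. The whole content is the converse: if $G$ is connected and not complete then $sn(G)\le n-2$, i.e. there is a proper $k$-colouring together with a set of at least two vertices whose colours are forced by the rest. Note first that, since $G$ is not complete, $k<n$, so in every optimal colouring some colour class has at least two vertices.

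Call a vertex \emph{rainbow} (for a fixed colouring $\phi$) if it is adjacent to all $k-1$ other colour classes; observe that two non-adjacent blank vertices must each be rainbow, since then every colour other than its own must appear on a coloured neighbour. The key lemma is that rainbow vertices are forced to exist. I would take an optimal colouring minimising the size $m$ of its smallest colour class: if a vertex of a smallest class missed a colour, recolouring it would yield either fewer colours (contradicting optimality, when the class is a singleton) or a strictly smaller smallest class (contradicting minimality, when $m\ge 2$); hence every vertex of a smallest class is rainbow. If $m\ge 2$ this finishes the proof at once — a smallest class then contains two non-adjacent rainbow vertices $u,v$, and colouring everything except $u,v$ leaves each of them seeing all $k-1$ other colours on its fully-coloured neighbourhood, so both are forced, and since $u\not\sim v$ there is no interaction and the extension is unique.

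The remaining, and main, case is $m=1$, where the smallest class is a single rainbow vertex $r$, so $\chi(G-r)=k-1$ and, as $G$ is not complete, $G-r$ is not complete either. Here two non-adjacent blanks need not be available, since low-degree vertices can never be rainbow, so instead one uncolours an edge $\{u,v\}$: I would show that if $u$ is rainbow and has a second neighbour coloured $\phi(v)$, while $v$ is adjacent to every colour outside $\{\phi(u),\phi(v)\}$, then the completion is again unique (first $u$ is forced, then $v$). The plan for $m=1$ is induction on $n$. If $r$ is universal then $G$ is the join of $r$ with $G-r$, and colouring $r$ with a fresh colour $k$ turns any Sudoku colouring of $G-r$ into one of $G$ — every blank lies in $G-r$, is adjacent to $r$, and so cannot take colour $k$ — giving $sn(G)\le sn(G-r)+1$, which by induction is at most $n-2$; if instead $r$ has a non-neighbour, one recolours that vertex into $r$'s class and re-runs the extremal argument to expose either a second rainbow vertex or a good rainbow edge.

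I expect the genuine obstacle to be exactly this singleton case, and within it two points in particular. The first is guaranteeing that a good non-adjacent pair or a good uncoloured edge can always be produced when $r$ is not universal; this is delicate because the rainbow vertices of an optimal colouring may form a clique, so one must argue that re-choosing or re-colouring the colouring always breaks this. The second is that deleting a universal $r$ may disconnect $G-r$, so the induction hypothesis (stated for connected graphs) does not apply directly. The cleanest way around the disconnection issue is the observation that, after colouring a universal $r$ with a fresh colour, a vertex is rainbow in $G$ if and only if it is rainbow in $G-r$; this reduces the search for a good blank pair to the smaller graph, and iterating it peels off universal vertices until one reaches a graph with no universal vertex, where the extremal colouring must be analysed to extract the required configuration.
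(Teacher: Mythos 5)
Your easy direction and your case $m\ge 2$ are sound (the latter is exactly the paper's first mechanism: two non-adjacent blank vertices, each seeing all $\chi(G)-1$ other colours on a fully coloured neighbourhood, are independently forced). The genuine gap is the case $m=1$, which you yourself flag as the main obstacle, and neither of your two branches closes it. In the universal branch, the inequality $sn(G)\le sn(G-r)+1\le n-2$ needs $sn(G-r)\le |G-r|-2$, but $G-r$ can be disconnected and then that bound is simply false: for $H=K_{k-1}\cup K_1$ with $k\ge 3$ one has $sn(H)=|H|-1$, because the isolated vertex can never be forced. This is not a corner case: take $G$ to be $K_k$ plus a pendant vertex attached at $a_1$, and $r=a_1$; then $G-r=K_{k-1}\cup K_1$ admits no Sudoku colouring with two blanks at all, while every two-blank Sudoku colouring of $G$ must leave $r$ itself blank --- so your proposed fix, which searches for the good pair inside $G-r$, cannot succeed in principle. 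In the non-universal branch, after recolouring a non-neighbour $w$ of $r$ into $r$'s class, the new colouring need not be extremal (if every class other than $\{r\}$ has size at least $3$, its smallest class now has size $2>m$), so ``re-running the extremal argument'' yields nothing about it; and passing to a fresh minimiser loses all connection to $w$.

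What is missing is one stronger lemma, which is the heart of the paper's proof: in a colouring minimising the number of colour-$1$ vertices, a colour-$1$ vertex $v$ has, for \emph{every} colour $c$, at least one full (your ``rainbow'') colour-$c$ neighbour. The proof is a simultaneous recolouring: if every colour-$c$ neighbour $u_i$ of $v$ missed some colour $c_i\notin\{1,c\}$, recolour each $u_i$ to $c_i$ and $v$ to $c$; this is proper (the $u_i$ are pairwise non-adjacent) and has fewer colour-$1$ vertices --- in your setting, where the colour-$1$ class is the singleton $\{r\}$, it would even produce a proper $(\chi(G)-1)$-colouring, an immediate contradiction. Your single-vertex recolouring move is strictly weaker and cannot extract this. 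With that lemma the paper needs no induction and no universal/non-universal split: one shows the colour-$1$ vertex $v$ is unique with degree exactly $\chi(G)-1$, that $\{v\}\cup N(v)$ is a clique of full vertices, and then, taking $w$ outside this clique with a neighbour $u\in N(v)$ (connectedness), recolours $w$ with colour $1$ and uncolours $u,v$; your own edge-uncolouring criterion then certifies the result as a Sudoku colouring with two blanks. I would replace your $m=1$ treatment with this argument; your extremal framework supports it directly.
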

The backwards direction already appears in~\cite{Sudoku_colourings} (see Corollary 3.4), so we focus on proving the statement ``let $G$ be a connected graph with  $sn(G)=|G|-1$. Then $G$ is complete''. This amounts to showing that non-complete graphs have partial proper $\chi(G)$-colourings with $\geq 2$ uncoloured vertices which have a unique extension to a full proper $\chi(G)$-colouring of $G$.

\section{Proofs}
In a $k$-coloured graph our set of colours will always be $[k]=\{1, \dots, k\}$.
 For a  colouring $\phi$ and vertices $u_1, \dots, u_k$, we use $\phi-u_1-\dots-u_k$ to mean the partial colouring formed by uncolouring the vertices $u_1, \dots, u_k$. For a partial colouring $\phi$ and a set of vertices $S$, we define $\phi(S):=\{\phi(s):s\in S\}$ to mean the set of colours appearing in $S$. 
We use $N(v)$ to denote the set of vertices connected to $v$ by an edge (our  graphs are simple, so this never includes $v$ itself).

The following is the tool we use for constructing Sudoku colourings in this note. It gives two kinds of Sudoku colourings with two uncoloured vertices.
\begin{lemma}\label{Lemma_sudoku_colouring}
Let $\psi$ be a partial proper $\chi(G)$-colouring with exactly two uncoloured vertices $u,v$. Suppose that either of the following holds:
\begin{enumerate}[(i)]
\item $uv$ is a nonedge and $|\phi(N(u))|=|\phi(N(v))|=\chi(G)-1$.
\item $uv$ is an edge, $|\phi(N(u))|=\chi(G)-1$, $|\phi(N(v))|=\chi(G)-2$, and $\phi(N(v))\subset  \phi(N(u))$.
\end{enumerate}
Then $\psi$ is a Sudoku colouring.
\end{lemma}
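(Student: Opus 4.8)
The plan is to show, in each case, that every uncoloured vertex is \emph{forced} to receive one specific colour, so that $\psi$ admits at most one proper extension, and then to verify that this unique candidate assignment is in fact proper, establishing existence. Together these two facts are exactly the assertion that $\psi$ is a Sudoku colouring. Throughout I write $k=\chi(G)$ and recall that the colour set is $[k]$; since $u$ and $v$ are uncoloured, the sets $\psi(N(u))$ and $\psi(N(v))$ record only the colours appearing on the \emph{coloured} neighbours (here I read the statement's $\phi$ as $\psi$).

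First I would dispose of case (i). Because $|\psi(N(u))|=k-1$, exactly one colour $c_u\in[k]$ is absent from $N(u)$, and any proper extension must colour $u$ with $c_u$, since every other colour already occurs on a neighbour of $u$. Symmetrically $v$ is forced to the unique colour $c_v$ missing from $\psi(N(v))$. As $uv$ is a nonedge, these two assignments place no constraint on one another, so the colouring $u\mapsto c_u$, $v\mapsto c_v$ is proper; it is also the only possibility, which gives uniqueness.

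Case (ii) is the one real step. As before, $u$ is forced to its unique missing colour $c_u$. For $v$ the set $[k]\setminus\psi(N(v))$ has size two, so a priori $v$ has two available colours, and the point is to pin them down using the containment hypothesis. Since $\psi(N(v))\subset\psi(N(u))$ and $|\psi(N(u))|-|\psi(N(v))|=1$, there is a single colour $d$ with $\psi(N(u))=\psi(N(v))\cup\{d\}$, whence $[k]\setminus\psi(N(v))=\{c_u,d\}$ with $c_u\neq d$. Now the edge $uv$ enters: once $u$ is coloured $c_u$, vertex $v$ may not also take $c_u$, so $v$ is forced to $d$. Finally one checks that $v\mapsto d$ is legal ($d\notin\psi(N(v))$ and $d\neq c_u$), so the forced colouring is proper and unique.

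The only genuinely delicate point is the colour-counting in case (ii): verifying that the two colours available to $v$ are precisely $\{c_u,d\}$. This is where the hypothesis $\psi(N(v))\subset\psi(N(u))$ is indispensable, for without containment the two colours missing from $N(v)$ could both differ from $c_u$, and then the edge $uv$ would fail to force $v$, destroying uniqueness. Everything else is routine bookkeeping.
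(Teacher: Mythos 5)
Your proposal is correct and follows essentially the same route as the paper: in each case the coloured neighbourhoods force a unique candidate colour for $u$ and then for $v$ (via the edge $uv$ and the containment $\psi(N(v))\subset\psi(N(u))$ in case (ii)), and that forced assignment is proper. Your write-up is slightly more careful than the paper's in that it explicitly verifies existence of the forced extension (and correctly reads the statement's $\phi$ as $\psi$), but the underlying argument is identical.
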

\begin{proof}
In case (i), there is precisely one colour missing from $N(u)$ and precisely one colour missing from $N(v)$. In a proper $\chi(G)$-colouring, $u$ and $v$ must receive exactly these colours, and so the extension is unique.

In case (ii), there are two colours $c,d$ missing from $N(v)$ and one of these colours (say $c$), is missing from $N(u)$. To complete the colouring $u$ must receive colour $c$, and $v$ must receive colour $d$, so the extension is unique.
\end{proof}
The following definition is crucial for us. 
\begin{definition}
Let $\phi$ be a proper $\chi(G)$-colouring of $G$. A vertex is \textbf{full} if it is adjacent to vertices of all colours (aside from its own) i.e. if $\phi(N(v))=[\chi(G)]\setminus \phi(v)$ (or equivalently if $|\phi(N(v))|=\chi(G)-1$).
\end{definition}

In graphs with Sudoku number $|G|-1$, it turns out that the full vertices form a complete subgraph.
\begin{lemma}\label{Lemma_full_vertices_complete}
Let $sn(G)=|G|-1$ and let $\phi$ be a proper $\chi(G)$-colouring of $G$. Then any two full vertices are connected by an edge.
\end{lemma}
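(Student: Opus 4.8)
The plan is to prove the statement by reducing it to Lemma~\ref{Lemma_sudoku_colouring}: I will show that if two full vertices were nonadjacent, then $G$ would admit a Sudoku colouring leaving two vertices uncoloured, i.e.\ with only $|G|-2$ coloured vertices, contradicting $sn(G)=|G|-1$.

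First I would assume, for contradiction, that $u$ and $v$ are full vertices of $\phi$ with $uv$ a nonedge, and form the partial colouring $\psi := \phi-u-v$ by uncolouring both. Since $\phi$ is a proper $\chi(G)$-colouring and uncolouring vertices cannot create a monochromatic edge, $\psi$ is a partial proper $\chi(G)$-colouring whose uncoloured vertices are exactly the two vertices $u,v$.

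Next I would verify that $\psi$ satisfies hypothesis (i) of Lemma~\ref{Lemma_sudoku_colouring}. The nonedge assumption gives $v\notin N(u)$ and $u\notin N(v)$, so neither of the two uncoloured vertices lies in $N(u)$ or in $N(v)$; consequently $\psi(N(u))=\phi(N(u))$ and $\psi(N(v))=\phi(N(v))$. Fullness of $u$ and $v$ then yields $|\psi(N(u))|=|\psi(N(v))|=\chi(G)-1$, which is exactly condition (i). Lemma~\ref{Lemma_sudoku_colouring} then tells me that $\psi$ is a Sudoku colouring, and since it leaves the two vertices $u,v$ uncoloured I conclude $sn(G)\le |G|-2$, the desired contradiction. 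Hence no nonadjacent pair of full vertices exists.

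The argument is short and I do not expect a serious obstacle; the one place needing care is verifying $\psi(N(u))=\phi(N(u))$ (and symmetrically for $v$), since this is precisely where the nonedge hypothesis enters. Were $uv$ instead an edge, uncolouring $v$ could drop $|\psi(N(u))|$ to $\chi(G)-2$, and hypothesis (i) of Lemma~\ref{Lemma_sudoku_colouring} would fail to apply --- which explains why the statement is restricted to nonadjacent full vertices, and why the companion case (ii) of Lemma~\ref{Lemma_sudoku_colouring} is phrased separately to handle edges.
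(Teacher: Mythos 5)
Your proposal is correct and follows exactly the paper's own argument: uncolour the two nonadjacent full vertices, observe that the nonedge hypothesis keeps all their neighbours coloured so that $|\psi(N(u))|=|\psi(N(v))|=\chi(G)-1$, and invoke Lemma~\ref{Lemma_sudoku_colouring}~(i) to get a Sudoku colouring with two uncoloured vertices, contradicting $sn(G)=|G|-1$. Your closing remark about where the nonedge hypothesis is needed matches the structure of the paper's lemma as well.
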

\begin{proof}
Let $u,v$ be full and suppose for contradiction that $uv$ is not an edge in $G$. Consider the partial colouring $\psi:=\phi-u-v$. Since $u,v$ are full, we have $|\phi(N(u))|=|\phi(N(v)))|=\chi(G)-1$. Since $uv$ is a non-edge, the neighbours of $u,v$ all remain coloured in $\psi$ and so  $|\psi(N(u))|=|\psi(N(v)))|=\chi(G)-1$. Thus, by Lemma~\ref{Lemma_sudoku_colouring} (i), $\psi$ is a Sudoku colouring. It has two uncoloured vertices, contradicting $sn(G)=|G|-1$.
\end{proof}

We say that a proper $k$-colouring of $G$ is $c$-minimal if it has as few colour $c$ vertices as possible (for a $k$-colouring of $G$). The following lemma shows that there are a lot of full vertices around all $c$-coloured vertices in a $c$-minimal colouring.
\begin{lemma}\label{Lemma_1_minimal_colouring}
Let $\phi$ be a $1$-minimal proper $\chi(G)$-colouring of $G$. Let $v$ be a vertex with $\phi(v)=1$. Then for every colour $c=2, \dots, \chi(G)$, the vertex  $v$ has at least one colour $c$ neighbour $u$ with $u$ full.
In particular, $v$ is full.
\end{lemma}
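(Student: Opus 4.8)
The plan is to argue by contradiction, keeping the given $1$-minimal colouring $\phi$ and a vertex $v$ with $\phi(v)=1$ fixed, and assuming that for some colour $c\in\{2,\dots,\chi(G)\}$ the vertex $v$ has no full neighbour of colour $c$. I will use two elementary recolouring moves. Move (a): if $v$ has no neighbour of colour $c$ at all, recolour $v$ to colour $c$; this is proper and strictly decreases the number of colour-$1$ vertices, contradicting $1$-minimality. In particular, $v$ must already have at least one colour-$c$ neighbour. Move (b): if a colour-$c$ neighbour $u$ of $v$ is not full, then since $u$ has colour $c$ we have $c\notin\phi(N(u))$ and $\phi(N(u))\subsetneq[\chi(G)]\setminus\{c\}$, so some colour $c'\neq c$ is missing from $N(u)$, and I recolour $u$ to $c'$.

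The key observation is that in move (b) the missing colour $c'$ cannot equal $1$: since $v$ is a colour-$1$ vertex adjacent to $u$, colour $1$ does appear in $N(u)$, hence $c'\in\{2,\dots,\chi(G)\}\setminus\{c\}$. Therefore recolouring $u$ to $c'$ leaves the number of colour-$1$ vertices unchanged, so the resulting colouring is again $1$-minimal. This move also strictly decreases the number of colour-$c$ neighbours of $v$: the vertex $u$ leaves colour $c$ and no new colour-$c$ vertex is created. Finally, it does not affect any other colour-$c$ neighbour $w$ of $v$, because $w$ and $u$ share the colour $c$ and are therefore non-adjacent, so $u\notin N(w)$ and the set of colours appearing on $N(w)$ is unchanged; in particular every surviving colour-$c$ neighbour of $v$ stays non-full.

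Putting these together, I start from $\phi$, in which (by the assumption, together with move (a)) $v$ has at least one colour-$c$ neighbour and all of them are non-full, and I repeatedly apply move (b). Each application preserves $1$-minimality, preserves the property that every remaining colour-$c$ neighbour of $v$ is non-full, and strictly decreases the number of such neighbours. Since this number is a nonnegative integer, after finitely many steps $v$ has no colour-$c$ neighbour; move (a) then yields a proper $\chi(G)$-colouring with one fewer colour-$1$ vertex, contradicting the $1$-minimality of $\phi$. This contradiction establishes that, for the original $\phi$, the vertex $v$ has a full colour-$c$ neighbour.

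The main point to watch is the bookkeeping across the iteration---verifying that each recolouring stays proper and $1$-minimal and that the surviving colour-$c$ neighbours remain non-full---but the two facts that $c'\neq 1$ and that equal-coloured vertices are non-adjacent make each of these checks immediate, and they are precisely what prevents the process from interfering with the colour-$1$ count or with the other colour-$c$ neighbours. The closing ``in particular'' statement then follows with no extra work: having a neighbour of every colour $c=2,\dots,\chi(G)$ gives $\phi(N(v))=[\chi(G)]\setminus\{1\}$, so $v$ is full.
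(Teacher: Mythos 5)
Your proof is correct and is in essence the paper's own argument: both hinge on exactly the same two observations --- that a non-full colour-$c$ neighbour $u$ of $v$ has some missing colour $c'\neq 1$ (since $v\in N(u)$ carries colour $1$), and that same-coloured vertices are non-adjacent so the recolourings cannot interfere with one another --- and both conclude by recolouring $v$ to $c$, contradicting $1$-minimality. The only difference is organisational: the paper recolours all non-full colour-$c$ neighbours of $v$ simultaneously and verifies properness of the resulting colouring in one step, whereas you perform the same recolourings one at a time, maintaining properness, the colour-$1$ count, and the non-fullness of the surviving colour-$c$ neighbours as invariants of the iteration.
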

\begin{proof}
First notice that is impossible that $v$ has no colour $c$ neighbours --- indeed otherwise, we could recolour $v$ with colour $c$ to get a proper colouring with one fewer colour 1 vertex (contradicting $1$-minimality). This proves the ``in particular $v$ is full'' part --- since we've shown that every colour other than $\phi(v)=1$ appears on $N(v)$.

Now let the set of colour $c$ neighbours of $v$ be $\{u_1, \dots, u_k\}$. Suppose for contradiction that none of these are full --- equivalently there are colours $c_1, \dots, c_k \in [\chi(G)]\setminus c$ with $c_i$ missing from $N(u_i)$.
Note that $c_i\neq 1$ for all $i$, since $v\in N(u_i)$ and $\phi(v)=1$. Note that $\{u_1, \dots, u_k\}$ is an independent set since all these vertices have colour $c$ and the colouring is proper.

Now recolour $u_i$ by $c_i$ for each $i$ and recolour $v$ by $c$. Notice that this colouring is proper. To show this, we need to check that the recoloured vertices $v, u_1, \dots, u_k$ have different colours to all their neighbours (everywhere else the colouring remains proper just because $\phi$ was proper). Indeed $v$ has no colour $c$ neighbours since $\{u_1, \dots, u_k\}$ was the set of all colour $c$ neighbours of $v$ and these have all been recoloured by colours $c_i\neq c$. Vertex $u_i$ has no colour $c_i$ neighbour since it initially had no colour $c_i$ neighbours and the only neighbour of $u_i$ that was recoloured was $v$ (which received colour $c\neq c_i$).

But the new colouring we have has one fewer colour $1$ vertex, contradicting $1$-minimality.
\end{proof}

Applying the above lemma to a graph with Sudoku number $|G|-1$ gives even more structure in a minimal colouring.
\begin{lemma}\label{Lemma_1_minimal_sudoku}
Let $sn(G)=|G|-1$ and let $\phi$ be a $1$-minimal proper $\chi(G)$-colouring of $G$. Then there is precisely one colour $1$  vertex. Additionally, this vertex $v$ is full and has $|N(v)|=\chi(G)-1$.
\end{lemma}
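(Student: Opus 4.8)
The plan is to prove the two assertions in sequence, using the earlier lemmas. The claim that $v$ is full is immediate from Lemma~\ref{Lemma_1_minimal_colouring} (its ``in particular'' conclusion), so the real content is the uniqueness of the colour $1$ vertex and the degree bound $|N(v)| = \chi(G)-1$. These two facts are closely linked: once I know there is a unique colour $1$ vertex $v$ that is full, the degree bound should follow by showing $v$ has no neighbour outside the $\chi(G)-1$ vertices witnessing its fullness.

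**First I would** establish uniqueness of the colour $1$ vertex. Suppose for contradiction there were two colour $1$ vertices $v$ and $w$. By Lemma~\ref{Lemma_1_minimal_colouring}, both $v$ and $w$ are full. Since they share the colour $1$ and the colouring is proper, $vw$ is a nonedge. But full vertices are connected by an edge in a graph with $sn(G)=|G|-1$, by Lemma~\ref{Lemma_full_vertices_complete}. This contradiction shows there is exactly one colour $1$ vertex. (There is at least one colour $1$ vertex because $\phi$ is a proper $\chi(G)$-colouring using all $\chi(G)$ colours.)

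**Next I would** prove $|N(v)| = \chi(G)-1$. Since $v$ is full, $v$ has at least one neighbour of each colour $c = 2, \dots, \chi(G)$, so $|N(v)| \geq \chi(G)-1$. Suppose for contradiction that $|N(v)| \geq \chi(G)$, i.e.\ $v$ has two neighbours $u_1, u_2$ of the same colour $c$. By Lemma~\ref{Lemma_1_minimal_colouring}, $v$ has a \emph{full} colour $c$ neighbour; I can take $u_1$ to be this full neighbour, with $u_2$ a second colour $c$ neighbour (distinct from $u_1$). The idea is to uncolour $v$ and $u_2$ and apply Lemma~\ref{Lemma_sudoku_colouring}. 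Since $v$ is the unique colour $1$ vertex and $u_2$ has colour $c \neq 1$, uncolouring them leaves $u_1$ (a neighbour of $v$, full) still colour $c$, so colour $c$ still appears on $N(v)$; since $v$ was full, $v$ sees all of $\{2,\dots,\chi(G)\}$ even after removing $u_2$, giving $|\psi(N(v))| = \chi(G)-1$. For $u_2$: it is adjacent to $v$ (the only uncoloured neighbour it loses), so $|\psi(N(u_2))| \geq |\phi(N(u_2))| - 1 \geq \chi(G)-2$, and since $v$ had colour $1$, the missing colours around $u_2$ include $1$.

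**The main obstacle** will be arranging the hypotheses of Lemma~\ref{Lemma_sudoku_colouring} exactly, since the two cases (edge vs.\ nonedge between the uncoloured vertices) impose different counting conditions. The cleanest route is to uncolour $v$ together with a suitable second vertex and check which case applies. I expect to uncolour $v$ and a colour $c$ neighbour $u_2$, so $vu_2$ is an edge and I am in case (ii): I need $|\psi(N(v))| = \chi(G)-1$, $|\psi(N(u_2))| = \chi(G)-2$, and $\psi(N(u_2)) \subset \psi(N(v))$. The fullness of $v$ and the existence of a \emph{full} colour $c$ neighbour $u_1 \neq u_2$ (guaranteed by Lemma~\ref{Lemma_1_minimal_colouring}) are precisely what keep $\psi(N(v))$ at size $\chi(G)-1$ after uncolouring. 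The delicate point is verifying $|\psi(N(u_2))| = \chi(G)-2$ and the containment $\psi(N(u_2)) \subset \psi(N(v))$; this may require choosing $u_2$ carefully or recolouring to force $u_2$'s neighbourhood to miss exactly the colours $1$ and $c$. If the containment fails for the naive choice, the fallback is that $u_2$ is itself full, in which case $v, u_2$ are two full vertices and I instead derive a contradiction via the non-edge argument after a recolouring — but the expected generic case is the clean application of Lemma~\ref{Lemma_sudoku_colouring}(ii) producing a Sudoku colouring with two uncoloured vertices, contradicting $sn(G)=|G|-1$.
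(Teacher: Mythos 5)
Your first step (uniqueness of the colour $1$ vertex) is correct and matches the paper: two colour $1$ vertices would both be full by Lemma~\ref{Lemma_1_minimal_colouring}, yet non-adjacent since they share a colour, contradicting Lemma~\ref{Lemma_full_vertices_complete}. The gap is in the degree bound, and it comes from uncolouring the wrong vertex. You uncolour $v$ together with $u_2$, the \emph{second} colour $c$ neighbour, which is not guaranteed to be full. Your inequality $|\psi(N(u_2))| \geq |\phi(N(u_2))| - 1 \geq \chi(G)-2$ silently assumes $|\phi(N(u_2))| \geq \chi(G)-1$, i.e.\ that $u_2$ is full --- exactly what you cannot assume. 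If $u_2$ is not full, then $\phi(N(u_2))$ already misses some colour $e \notin \{1,c\}$, and after uncolouring $v$ (the unique colour $1$ vertex) the set $\psi(N(u_2))$ misses $1$, $c$, and $e$, so $|\psi(N(u_2))| \leq \chi(G)-3$. This is not a technicality: the partial colouring genuinely fails to be Sudoku, since after $v$ is forced to colour $1$, the vertex $u_2$ can legally receive either $c$ or $e$, so the extension is not unique. Your proposed fallback is also backwards --- the case where $u_2$ \emph{is} full is the case where your argument works, and when it fails ($u_2$ not full) the ``non-edge argument'' is unavailable because $v$ and $u_2$ are adjacent, and Lemma~\ref{Lemma_full_vertices_complete} says nothing about adjacent full vertices.

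The repair is to swap the roles of the two colour $c$ neighbours, which is precisely what the paper does: uncolour $v$ and the \emph{full} colour $c$ neighbour $u_1$ (whose existence Lemma~\ref{Lemma_1_minimal_colouring} guarantees), keeping $u_2$ coloured as a witness that colour $c$ still appears on $N(v)$. Then $\psi(N(v)) = [\chi(G)]\setminus\{1\}$, and since $u_1$ is full with colour $c$ and $v$ is the unique colour $1$ vertex, $\psi(N(u_1)) = \phi(N(u_1))\setminus\{1\} = [\chi(G)]\setminus\{1,c\}$, which has exactly size $\chi(G)-2$ and is contained in $\psi(N(v))$. Lemma~\ref{Lemma_sudoku_colouring}(ii) then applies cleanly. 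Note how this uses the uniqueness of the colour $1$ vertex (your step one) to know that uncolouring $v$ removes colour $1$ from $N(u_1)$ entirely --- so the order of the two steps matters, as you correctly sensed.
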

\begin{proof}
Let $v$ be a colour $1$ vertex. By Lemma~\ref{Lemma_1_minimal_colouring} $v$ is full. There can't be another colour $1$ vertex $z$ --- because otherwise $z$ would also be full, which would give two disconnected full vertices (contradicting Lemma~\ref{Lemma_full_vertices_complete}).

Suppose that $|N(v)|>\chi(G)-1$. Then, by the pigeonhole principle there must be some colour $c$ which occurs more than once on $N(v)$. By Lemma~\ref{Lemma_1_minimal_colouring} $v$ has some full colour $c$ neighbour $u$. Let $w$ be some other colour $c$ neighbour of $v$. Now let $\psi:=\phi-u-v$. Note that $\psi(N(v))=\phi(N(v))=[\chi(G)]\setminus 1$ (the first equality holds because precisely one neighbour $u$ of $v$ was uncoloured, but that neighbour $u$ had colour $c$ which is still present at $w$. The second equality holds because $v$ is full and has colour $1$). Also  $\psi(N(u))=\phi(N(u))\setminus 1=[\chi(G)]\setminus\{1,c\}$ (the first equality holds because precisely one neighbour $v$ or $u$ was uncoloured, and that neighbour had colour $1$ which isn't present anywhere else in the graph. The second equality holds because $\phi(N(u))=[\chi(G)]\setminus c$ since $u$ is full and has colour $c$). Thus by Lemma~\ref{Lemma_sudoku_colouring} (ii), $\psi$ is a Sudoku colouring with two uncoloured vertices, contradicting $sn(G)=|G|-1$.
\end{proof}

We are ready to prove our main theorem. 

\begin{proof}[Proof of Theorem~\ref{Main_Theorem}]
The backwards direction already appears in~\cite{Sudoku_colourings} (see Corollary 3.4), so it remains to prove that every connected $G$ with  $sn(G)=|G|-1$ is complete. To that end, let $G$ be connected with  $sn(G)=|G|-1$.

Consider a 1-minimal proper $\chi(G)$-colouring of $G$. By Lemma~\ref{Lemma_1_minimal_sudoku}, there is precisely one colour 1 vertex, call it $v$. Also by Lemma~\ref{Lemma_1_minimal_sudoku}, $v$ is full and $|N(v)|=\chi(G)-1$ --- which, using the definition of ``full'', implies that all neighbours of $v$ have different colours. By Lemma~\ref{Lemma_1_minimal_colouring}, we have that all neighbours of $v$ are full. Now we have that all vertices in $v\cup N(v)$  are full, and so Lemma~\ref{Lemma_full_vertices_complete} tells us that $v\cup N(v)$ is complete. 

Unless $G$ is complete, then, by connectedness, there is some vertex $w$ outside $v\cup N(v)$ with a neighbour $u$ in $N(v)$. Now construct a colouring $\psi$ by recolouring $w$ by colour $1$ and uncolouring $u,v$. First notice that this is a proper (partial) colouring --- indeed $w$ has no colour $1$ neighbours since $v$ is the unique colour $1$ vertex and $w\not\in N(v)$. We have that $\psi(N(v))=\phi(N(v))\setminus \phi(u)=[\chi(G)]\setminus\{1, \phi(u)\}$ (the first equality holds because the neighbour $u$ of $v$ was uncoloured and $\phi(u)$ doesn't appear anywhere else on $N(v)$ since the neighbours of $v$ have different colours. The second equality holds because $\phi(N(v))=[\chi(G)]\setminus 1$ since $v$ is full and has colour $1$).  Also $\psi(N(u))=[\chi(G)]\setminus\{\phi(u)\}$ (colour $1$ is present on $N(u)$ in $\psi$ because $\psi(w)=1$ and $w\in N(u)$. All colours in $[\chi(G)]\setminus\{1, \phi(u)\}$ are present on $N(u)$ in $\psi$ because the vertices of $N(v)\setminus u$ have exactly these colours in $\phi$, $u$ is connected to all of them, and their colours don't change going from $\phi$ to $\psi$). Thus by Lemma~\ref{Lemma_sudoku_colouring} (ii), $\psi$ is a Sudoku colouring with two uncoloured vertices, contradicting $sn(G)=|G|-1$.
\end{proof}

\bibliography{Sudoku}

\begin{thebibliography}{1}

\bibitem{Sudoku_colourings}
{Gee-Choon Lau}, {J. Maria Jeyaseeli}, {Wai-Chee Shiu}, and {S. Arumugam}.
\newblock Sudoku number of graphs.
\newblock {\em arXiv:2206.08106}, 2022.

\bibitem{mcguire2014there}
Gary McGuire, Bastian Tugemann, and Gilles Civario.
\newblock There is no 16-clue sudoku: Solving the sudoku minimum number of
  clues problem via hitting set enumeration.
\newblock {\em Experimental Mathematics}, 23(2):190--217, 2014.

\end{thebibliography}
\bibliographystyle{plain}

\end{document}